\newtheorem{theorem}{Theorem}
\newtheorem{definition}{Definition}
\newtheorem{proposition}{Proposition}
\newtheorem{remark}{Remark}
\title[Solvability of Gleason's problem]{Solvability of the Gleason problem on a class of bounded pseudoconvex domains}
\author{Timothy G. Clos}
\address{Timothy G. Clos,\, Kent State University}
\email{ tclos@kent.edu}
\date{\today}
\begin{document}

 \maketitle

\begin{abstract}
We show that if a bounded pseudoconvex domain satisfies the solvability of the bounded $\overline{\partial}$ problem, then the ideal of bounded holomorphic functions vanishing at a point $\alpha$ in the domain is generated by $(z-\alpha)$.  We also prove a smooth analog of the main result for bounded pseudoconvex domains with a sufficiently smooth boundary and also consider the Bergman space case.
\end{abstract}

\section{Preliminaries}
Let $\Omega\subset \mathbb{C}^n$ be a domain and we denote the space of bounded holomorphic functions on $\Omega$ as $H^{\infty}(\Omega)$.  Here, we equip $H^{\infty}(\Omega)$ with the usual compact open topology.  We say a bounded pseudoconvex domain $\Omega\subset \mathbb{C}^n$ is Gleason solvable if the ideal of functions in $H^{\infty}(\Omega)$ vanishing at a point is finitely generated.  More precisely, we have the following definition.
\begin{definition}
We say $\Omega$ Gleason solvable if for any $\alpha\in \Omega$, the ideal in $H^{\infty}(\Omega)$ of functions vanishing at $\alpha$ is generated by $(z-\alpha)$.  That is, if $g\in H^{\infty}(\Omega)$ and $g(\alpha)=0$, then there exists $g_1,...,g_n\in H^{\infty}(\Omega)$ so that $g\equiv \sum_{j=1}^n (z_j-\alpha_j)g_j$ where $\alpha:=(\alpha_1,...,\alpha_n)$.
\end{definition}

As we will show, bounded pseudoconvex domains for which the $\overline{\partial}$-problem is solvable in $L^{\infty}$, which we will call $L^{\infty}$-pseudoconvex, have the nice algebraic property of Gleason solvablility.  As a reference for such domains and more generally Stein manifolds with trivial $L^{\infty}$ $\overline{\partial}$-cohomology, see the classical work \cite{HL}.  We will define $L^{\infty}$-pseudoconvex domains in a more precise manner.

\begin{definition}
Let $\Omega\subset \mathbb{C}^n$ be a bounded pseudoconvex domain.  We say $\Omega$ is $L^{\infty}$-pseudoconvex if for any $\overline{\partial}$-closed bounded form $\beta\in L^{\infty}_{(0,s)}(\Omega)$, there exists $\alpha\in  L^{\infty}_{(0,s-1)}(\Omega)\cap \textbf{dom}(\overline{\partial})$ so that 
\[\overline{\partial}\alpha=\beta.\]

\end{definition}

Gleason solvability is sometimes referred to as Hefer's condition (as developed by Hefer in 1940) for the ring of bounded holomorphic functions on $\Omega$.  For more information on Hefer's condition, see \cite[Chapter 5]{Kr}.  Many of the methods used in the study of Hefer's condition involve the integral representation techniques developed in \cite{Henk} and \cite{rami}.  Gleason solvability was first considered for the ball algebra on the unit ball in $\mathbb{C}^n$.  This was first proved by Leibenson, albeit informally.  See \cite{WR}.  Gleason solvability for holomorphic Bergman spaces, holomorphic mixed norm spaces, and the holomorphic Bloch spaces were studied in \cite{WR}, \cite{YL}, \cite{JO}, \cite{RS}, and \cite{KZ}.  The domains considered were intitially the unit ball, but then were generalized to strongly pseudoconvex domains with $C^2$-smooth boundary.  There is also considerable interest on other function spaces and domains, including the weighted $L^p$ spaces on egg shaped domains, as seen in \cite{GSR}.  See \cite{Hu} for some results on Gleason solvability on the harmonic Bloch spaces of bounded strongly pseudoconvex domains with smooth boundary.

An application of Gleason solvable domains is a several variables partial generalization of the following commuting Toeplitz operator theorem seen in \cite{ACR}.  We state the theorem for the convenience of the reader.
\begin{theorem}\cite{ACR}
Let $\Omega\subset \mathbb{C}$ be a bounded domain.  Suppose $\phi\in H^{\infty}(\Omega)$ and $\psi$ be a bounded measurable function so that $T_{\phi}$ and $T_{\psi}$ commute on the Bergman space $A^2(\Omega)$.  Then $\psi$ is holomorphic on $\Omega$. 
\end{theorem}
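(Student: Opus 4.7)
The plan is to convert the commutation hypothesis into a pointwise identity for the Bergman projection and then iterate. Since $\phi \in H^{\infty}(\Omega)$ is holomorphic, $T_\phi$ acts on $A^2(\Omega)$ simply as multiplication by $\phi$, and consequently $T_\psi T_\phi = T_{\phi\psi}$ for any bounded measurable $\psi$. Thus the commutation hypothesis $T_\phi T_\psi = T_\psi T_\phi$ is equivalent to $T_\phi T_\psi = T_{\phi\psi}$, i.e.,
\[
\phi(z)\, P(\psi g)(z) = P(\phi\psi g)(z)
\]
for every $g \in A^2(\Omega)$ and every $z \in \Omega$, where $P$ denotes the Bergman projection on $\Omega$.

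By a straightforward induction, using the general identity $T_\phi T_f = T_{\phi f}$ valid for $\phi \in H^\infty(\Omega)$ and $f \in L^\infty(\Omega)$, one deduces $\phi(z)^n\, P(\psi g)(z) = P(\phi^n\psi g)(z)$ for every $n \geq 0$. Setting $u_g := \psi g - P(\psi g) \in L^2(\Omega) \ominus A^2(\Omega)$ and noting that $\phi^n P(\psi g)$ already lies in $A^2(\Omega)$, this rearranges to $P(\phi^n u_g) = 0$, i.e.,
\[
\phi^n u_g \perp A^2(\Omega) \quad \text{in } L^2(\Omega)
\]
for every $n \geq 0$ and every $g \in A^2(\Omega)$. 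Dualizing, $u_g$ is orthogonal in $L^2(\Omega)$ to $\bar\phi^n h$ for every $h \in A^2(\Omega)$ and $n \geq 0$.

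The final step is to upgrade these orthogonality relations to $u_g = 0$. Combined, if needed, with the analogous relations obtained from the commuting adjoints $T_{\bar\phi}, T_{\bar\psi}$ on $A^2(\Omega)$, the goal is to show that the relevant span is dense in $L^2(\Omega)$, thereby forcing $u_g = 0$ for every $g \in A^2(\Omega)$. Specializing to $g \equiv 1$ then gives $\psi = \psi \cdot 1 \in A^2(\Omega)$, so $\psi$ is holomorphic. I expect the main obstacle to lie precisely in this density step on a general bounded planar domain: when $\phi$ is univalent it follows quickly from Stone--Weierstrass applied on $\phi(\Omega)$, but when $\phi$ is merely non-constant one must first localize on a subdomain where $\phi' \neq 0$ (which exists by the open mapping theorem), recover a local vanishing of $u_g$ there, and then promote this to global vanishing by exploiting that $P(\psi g)$ is already holomorphic on all of $\Omega$.
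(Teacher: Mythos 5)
The paper does not actually prove this statement: it is quoted from \cite{ACR} purely as background, so there is no in-paper proof to compare against and your attempt must be judged on its own terms. (Note also that the statement as reproduced omits the hypothesis, essential in \cite{ACR}, that $\phi$ be nonconstant; your sketch implicitly restores it.)

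Your formal reduction is sound in substance, with one misstatement: $T_\phi T_f = T_{\phi f}$ is \emph{not} a general identity for $\phi\in H^\infty(\Omega)$ and $f\in L^\infty(\Omega)$ --- since $T_fT_\phi=T_{f\phi}$ \emph{is} general, your claimed identity would force every $T_f$ to commute with $T_\phi$. What you actually need is to apply the $n=1$ consequence of the commutation hypothesis, $\phi\,P(\psi g)=P(\phi\psi g)$ for all $g\in A^2(\Omega)$, with $g$ replaced by $\phi g$ at each stage of the induction; this does yield $\phi^n P(\psi g)=P(\phi^n\psi g)$ and hence $\phi^n u_g\perp A^2(\Omega)$ for all $n\ge 0$, as you assert.

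The genuine gap is the step you yourself flag: deducing $u_g=0$ from $\langle u_g,\bar\phi^{\,n}h\rangle=0$ for all $n\ge0$ and $h\in A^2(\Omega)$. This is not a technical remainder to be filled in later; it is the entire analytic content of the theorem, while the reduction preceding it is the routine part. For univalent $\phi$ your Stone--Weierstrass remark does work (take $h\equiv 1$ and transport to $\phi(\Omega)$), but for general nonconstant $\phi$ the density of the span of $\{\phi^m\bar\phi^{\,n}h\}$ is not accessible by soft arguments, and your proposed localization does not obviously make sense: the orthogonality relations are integrals over all of $\Omega$, so restricting to a subdomain where $\phi'\ne 0$ does not produce a local orthogonality statement. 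Moreover, even if $u_g$ were shown to vanish on some open set, $u_g$ is merely an $L^2$ function with no unique continuation property, so ``promoting to global vanishing'' requires a separate mechanism (one could instead aim to show $\psi$ is holomorphic off the discrete zero set of $\phi'$ and invoke Riemann's removable singularity theorem, but the local step is precisely what is missing). The proof in \cite{ACR} supplies this step by quite different means; as written, your proposal establishes the easy half and leaves the theorem's core unproved.
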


The commuting Toeplitz operator problem in several variables remains unsolved on the Bergman spaces of bounded pseudoconvex domains.  However, some partial results were obtained on the the ball, strongly pseudoconvex domains in general, and on domains where the $\overline{\partial}$-problem can be solved in $L^{\infty}$.  See \cite{ST} for results concerning the commuting Toeplitz operator problem on the Bergman space of such domains.  In the paper \cite{CI} it was shown that the Gleason solvability condition implies $\Omega$ can be identified with an open subset of its maximal ideal space.  Furthermore, this condition implies the density of an algebra generated by conjugate holomorphic and holomorphic functions in various function spaces.  Such density results are required in the proof of the commuting Toeplitz operator problem in several variables as seen in \cite{ST}.  As an example, any bounded strongly pseudoconvex domain in $\mathbb{C}^2$ with a boundary of class $C^4$ is Gleason solvable in the continuous case.    

\begin{theorem}\cite{KN}
Let $\Omega\subset \mathbb{C}^2$ be bounded and strongly pseudoconvex with a $C^4$ smooth boundary.  Then $\Omega$ is Gleason solvable in the continuous case.  That is, assume $\phi\in H^{\infty}(\Omega)\cap C(\overline{\Omega})$ so that $\phi(\alpha)=0$ for some $\alpha:=(\alpha_1, \alpha_2)\in \Omega$.  Then $\phi\equiv (z_1-\alpha_1)\phi_1+(z_2-\alpha_2)\phi_2$ where $\phi_1, \phi_2$ are in the same space as $\phi$. 
\end{theorem}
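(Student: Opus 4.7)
The plan is to combine a smooth Gleason decomposition with a single $\bar\partial$-correction, taking advantage of the fact that on a bounded strongly pseudoconvex domain in $\mathbb{C}^2$ with $C^4$ boundary the $\bar\partial$-equation can be solved with continuous boundary data (the classical Henkin--Ramirez integral solution).

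First, I would produce smooth coefficients $F_1,F_2\in C^\infty(\Omega)\cap C(\overline\Omega)$ with $\phi=(z_1-\alpha_1)F_1+(z_2-\alpha_2)F_2$. This is done by patching the local holomorphic decomposition near $\alpha$ (from the Taylor expansion of $\phi$, giving $f_j$ holomorphic on a ball $U\ni\alpha$) against the global but non-holomorphic formula $g_j(z):=\overline{(z_j-\alpha_j)}\phi(z)/|z-\alpha|^2$ via a cutoff $\chi\in C_c^\infty(U)$ with $\chi\equiv 1$ near $\alpha$: set $F_j:=\chi f_j+(1-\chi)g_j$. Then $\bar\partial F_j$ vanishes near $\alpha$ and has coefficients in $C(\overline\Omega)$, and differentiating the decomposition yields the compatibility identity $(z_1-\alpha_1)\bar\partial F_1+(z_2-\alpha_2)\bar\partial F_2=0$ on $\Omega$.

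Next, exploit the Koszul structure of the two-variable case: there is a well-defined $\bar\partial$-closed $(0,1)$-form $\omega$ on $\Omega$ determined by $\omega=\bar\partial F_1/(z_2-\alpha_2)$ on $\{z_2\neq\alpha_2\}$ and $\omega=-\bar\partial F_2/(z_1-\alpha_1)$ on $\{z_1\neq\alpha_1\}$ (these agree by the compatibility identity and $\omega\equiv 0$ near $\alpha$), with coefficients in $C(\overline\Omega)$. Apply the Henkin--Ramirez solution operator to obtain $h\in C(\overline\Omega)$ with $\bar\partial h=\omega$, and set $\phi_1:=F_1-(z_2-\alpha_2)h$, $\phi_2:=F_2+(z_1-\alpha_1)h$. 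A short computation gives $\bar\partial\phi_j=0$ and $(z_1-\alpha_1)\phi_1+(z_2-\alpha_2)\phi_2=\phi$ on $\Omega$, while $\phi_j\in C(\overline\Omega)$; holomorphicity on $\Omega$ together with compactness of $\overline\Omega$ then gives $\phi_j\in H^\infty(\Omega)\cap C(\overline\Omega)$.

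The main difficulty is the step $\bar\partial h=\omega$ with $h$ continuous up to the boundary: the datum $\omega$ is only continuous on $\overline\Omega$ (not smooth), so one truly needs a solution operator that is bounded from continuous data to continuous solutions. This is precisely what strong pseudoconvexity together with the $C^4$ regularity of $b\Omega$ provide through the Henkin--Ramirez integral kernels and the resulting H\"older estimates; a purely $L^\infty$ or $L^2$ solvability of $\bar\partial$ would not be enough to keep the output inside $C(\overline\Omega)$, which is why the continuous case requires substantially more than the $L^{\infty}$-pseudoconvexity used elsewhere in this paper.
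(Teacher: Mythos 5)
The paper does not actually prove this statement---it is quoted from \cite{KN} as background---but your argument is a correct reconstruction of the standard proof and follows essentially the same route the paper itself takes for its main theorem in the case $n=2$: the cutoff decomposition $F_j=\chi f_j+(1-\chi)\,\overline{(z_j-\alpha_j)}\,\phi/|z-\alpha|^2$, the Koszul-type compatibility identity $(z_1-\alpha_1)\overline{\partial}F_1+(z_2-\alpha_2)\overline{\partial}F_2=0$, and a single $\overline{\partial}$-correction, with the $L^{\infty}$-solvability of $\overline{\partial}$ replaced by the Henkin--Ramirez/Kerzman solution operator that maps continuous data to solutions in $C(\overline{\Omega})$. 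You also correctly isolate the one ingredient that genuinely requires strong pseudoconvexity and boundary regularity, namely the uniform (H\"older) estimate keeping the correction term continuous up to the boundary.
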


We will now define some terms related to the $\overline{\partial}$-Koszul complex.  We let $\Omega\subset \mathbb{C}^n $ be a domain and
we define $\mathcal{C}_{(0,s)}:=L^{\infty}(\Omega)\cap C^{\infty}_{(0,s)}(\Omega)$ where $C^{\infty}_{(0,s)}(\Omega)$ are $(0,s)$-forms that are smooth on $\Omega$.
For $V$ an $m$-dimensional vector space with basis $\{e_1,...,e_m\}$, we define
$\wedge^r V:=\textbf{span}\{e_{j_1}\wedge e_{j_2}\wedge...\wedge e_{j_r}: j_1<...<j_r\}$.  Then we define the tensor product \[\Gamma^{\infty}_{(r,s)}:=\wedge^r V\otimes \mathcal{C}_{(0,s)}\] where $r$ and $s$ are non-negative integers.  Then we define the densely defined unbounded operator \[\overline{\partial}:\{f\in \Gamma^{\infty}_{(r,s)}: \overline{\partial}f\in \Gamma^{\infty}_{(r,s+1)}\}\rightarrow \Gamma^{\infty}_{(r,s+1)}\] as \[\overline{\partial}(e_J\otimes W)=e_J\otimes \overline{\partial}W .\]  We note that a similar definition holds for the $L^2$ and $C^{\infty}(\overline{\Omega})$ setting (where $L^{\infty}$ is replaced with $L^2$ in these definitions).  We will denote the $L^2$ analog of $\Gamma^{\infty}_{(r,s)}$ as $\Gamma^2_{(r,s)}$ and the $C^{\infty}(\overline{\Omega})$ analog as $\Gamma^{C^{\infty}(\overline{\Omega})}_{(r,s)}$.  Next, for any bounded holomorphic mapping $F:=(f_1,...,f_n):\Omega\rightarrow \mathbb{C}^n$ we will define the operator \[\tau_F:\Gamma^{\infty}_{(r+1,s)}\rightarrow \Gamma^{\infty}_{(r,s)}\] as

\begin{enumerate}
    \item $\tau_F(e_j\otimes W)=f_jW$
    \item $\tau_F\overline{\partial}=\overline{\partial}\tau_F$ on $\{f\in \Gamma^{\infty}_{(r,s)}: \overline{\partial}f\in \Gamma^{\infty}_{(r,s+1)}\}$
    \item $\tau_F\tau_F=0$ and $\overline{\partial}^2=0$.
    \item $\tau_F(A\wedge B)=\tau_F(A)\wedge B+(-1)^{|A|}A\wedge \tau_F(B)$ where $|A|$ is the order of $A$ in $\bigcup_{r=0}^m \bigwedge^r V$.
\end{enumerate} 

Introducing more notation, we define $\mathcal{D}_s:=\mathcal{C}_{(0,s)}\cap \textbf{dom}(\overline{\partial})$.

\section{Main Results and Proofs}

One can modify the proofs of \cite[Lemma 1]{ST} and \cite[Lemma 3]{ST} to get the following proposition, which we prove for the convenience of the reader.

\begin{proposition}\label{prop3} Let $\Omega\subset \mathbb{C}^n$ be a bounded $L^{\infty}$-pseudoconvex domain and $F:=(f_1,...,f_n):\Omega\rightarrow \Omega$ be injective, holomorphic, and $C^{\infty}$-smooth up to $\overline{\Omega}$.  Let $W\in \Gamma^{\infty}_{(r,s)}\cap \textbf{dom}(\overline{\partial})$ so that
\begin{enumerate}
\item $\overline{\partial}W=0$.
\item $W$ is supported away from $F^{-1}(0,0,...,0)$.
\item $\tau_F(W)=0$.

\end{enumerate}
Then there exists $Y\in \Gamma^{\infty}_{(r+1,s-1)}\cap \textbf{dom}(\overline{\partial})$ so that $\tau_F\overline{\partial}Y=W$.

\end{proposition}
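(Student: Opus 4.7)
The plan is to combine a cutoff Koszul homotopy with respect to $F$ with a descending induction in which the $L^{\infty}$-pseudoconvex hypothesis is invoked at each level. Choose a smooth cutoff $\chi$ that equals $1$ on a neighborhood of $\operatorname{supp}(W)$ and vanishes on a neighborhood of $F^{-1}(0)$, and set $\sigma_F:=\sum_{j=1}^n(\bar f_j/|F|^2)\,e_j$; then $\chi\sigma_F\in\Gamma^{\infty}_{(1,0)}$ and $\tau_F(\chi\sigma_F)=\chi$. Define a Koszul tower inductively by $Z^{(0)}:=\chi\sigma_F\wedge W$ and
\[
Z^{(k)}:=\chi\sigma_F\wedge\overline{\partial}Z^{(k-1)}\in\Gamma^{\infty}_{(r+1+k,\,s+k)},\qquad 1\leq k\leq K,
\]
where $K:=n-r-1$. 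The Leibniz rule (4), the identity $\overline{\partial}W=0$, and the fact that $\chi\equiv1$ on the nested supports of the previous $Z^{(k-1)}$ yield $\tau_F Z^{(0)}=W$ and $\tau_F Z^{(k)}=\overline{\partial}Z^{(k-1)}$ for $k\geq 1$.

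The crucial observation is that the tower terminates with $\overline{\partial}Z^{(K)}=0$. Since $\wedge^n V$ is one-dimensional, write $Z^{(K)}=e_1\wedge\cdots\wedge e_n\otimes z_K$ for a single $(0,s+K)$-form $z_K$ supported away from $F^{-1}(0)$. Applying $\overline{\partial}$ to $\tau_F Z^{(K)}=\overline{\partial}Z^{(K-1)}$ and using $\overline{\partial}^2=0$ together with the holomorphy of each $f_l$ gives
\[
\sum_{l=1}^n(-1)^{l-1}f_l\,(e_1\wedge\cdots\wedge\widehat{e_l}\wedge\cdots\wedge e_n)\otimes\overline{\partial}z_K=0,
\]
and linear independence of the $e_1\wedge\cdots\wedge\widehat{e_l}\wedge\cdots\wedge e_n$ forces $f_l\,\overline{\partial}z_K\equiv0$ for every $l$. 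At every point of $\operatorname{supp}(z_K)$ some $f_l$ is nonzero, so $\overline{\partial}z_K=0$ and hence $\overline{\partial}Z^{(K)}=0$.

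Set $Y^{(K+1)}:=0$ and descend: for $k=K,K-1,\dots,0$, apply $L^{\infty}$-pseudoconvexity componentwise in $\wedge^{r+1+k}V$ to solve
\[
\overline{\partial}Y^{(k)}=Z^{(k)}-\tau_F Y^{(k+1)}\in\Gamma^{\infty}_{(r+1+k,\,s+k)},
\]
obtaining $Y^{(k)}\in\Gamma^{\infty}_{(r+1+k,\,s+k-1)}$. The right-hand side is $\overline{\partial}$-closed at each step because $\tau_F\overline{\partial}=\overline{\partial}\tau_F$, $\tau_F^2=0$, and the tower relations give $\overline{\partial}(Z^{(k)}-\tau_F Y^{(k+1)})=\overline{\partial}Z^{(k)}-\tau_F Z^{(k+1)}=0$, with $\overline{\partial}Z^{(K)}=0$ handling the base case. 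Setting $Y:=Y^{(0)}$, one has $\tau_F\overline{\partial}Y=\tau_F Z^{(0)}-\tau_F^2 Y^{(1)}=W$, as desired.

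The main technical obstacle is that the definition of $L^{\infty}$-pseudoconvexity only guarantees solutions in $L^{\infty}$, whereas $\mathcal{C}_{(0,s+k-1)}$ requires the stronger regularity $L^{\infty}\cap C^{\infty}(\Omega)$. Upgrading each bounded solution of a smooth $\overline{\partial}$-closed datum to a smooth bounded one is handled by combining the $L^{\infty}$ solvability with the local Dolbeault--Grothendieck lemma on the pseudoconvex Stein domain $\Omega$, exactly as in the proofs of \cite{ST}.
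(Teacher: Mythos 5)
Your proof is correct and follows essentially the same route as the paper's: the same cutoff Koszul homotopy $\chi\sigma_F\wedge(\cdot)$ combined with one application of $L^{\infty}$-solvability of $\overline{\partial}$ per level, your explicit tower $Z^{(k)}$ being exactly the unrolled form of the paper's descending induction on $s$. The only cosmetic differences are the termination criterion --- you stop at top exterior degree $\wedge^n V$ via the linear-independence argument, whereas the paper stops at antiholomorphic degree $s=n$, where every $(0,n)$-form is automatically $\overline{\partial}$-closed --- and that you explicitly flag the interior-regularity upgrade of the $L^{\infty}$ solutions, which the paper leaves implicit.
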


\begin{proof} 

Let $\chi\in C^{\infty}(\overline{\Omega})$ so that $\chi\equiv 1$ on the support of $W$ and $\text{supp}(\chi)\cap F^{-1}(0,0,...,0)=\emptyset$.  Now define
\[g_j:=\frac{\chi \overline{f_j}}{\sum_{j=1}^n |f_j|^2}\] and define
\[X:=\sum_{j=1}^n e_j\otimes g_j\in \Gamma^{\infty}_{(1,0)}.\]  Now, $\tau_F(X)=1$ on the support of $W$ and so we define
$Y:=X\wedge W\in \Gamma^{\infty}_{(r+1,s)}$.  Then 
\[\tau_F(Y)=\tau_F(X)\wedge W-X\wedge \tau_F(W)=1\wedge W=W.\] and the support of $Y$ is away from $F^{-1}(0,0,...,0)$.  Furthermore, $\overline{\partial}Y$ is bounded since $\overline{\partial}W=0$ and $g_j\in C^{\infty}(\overline{\Omega})$ for $j\in \{1,...,n\}$.  Note that in \cite[Proof of Lemma 3]{ST} we needed $X$ to have compact support.  This is not needed in this proposition since we assumed $F$ is smooth up to the closure of the domain (that is, $\{f_1,...,f_n\}\subset C^{\infty}(\overline{\Omega})$) and injective (so $F^{-1}(0,0,...,0)$ is a singleton set).  The rest of the proof follows the descending induction procedure seen in \cite[Proof of Lemma 3]{ST} with some minor modifications.  Let $s=n$, $0\leq r\leq m-1$ and also $W\in \Gamma^{\infty}_{(r,n)}$ so that
$\text{supp}(W)\cap F^{-1}(0)=\emptyset$ and $\tau_F(W)=0$.  It is clear that $\overline{\partial}W=0$ since any $(0,n)$ form is $\overline{\partial}$-closed.  Then there is $Y_1\in \Gamma^{\infty}_{(r+1,n)}$ so that $\tau_F(Y_1)=W$.  Also, it is clear that $\overline{\partial}Y_1=0$.  Since $\Omega$ is $L^{\infty}$-pseudoconvex, there exists $Y\in \Gamma^{\infty}_{(r+1, n-1)}$ so that $\overline{\partial}Y=Y_1$.  In other words,
$\tau_F\overline{\partial}(Y)=W$.  This is our base case.  Now assume our proposition is true for $s=k+1, k+2,...,n$ and $r=0,1,...,m-1$. Suppose $0\leq r\leq m-1$ and suppose $W\in \Gamma_{(r,k)}^{\infty}$ has the following properties:
\begin{enumerate}
\item $\text{supp}(W)\cap F^{-1}(0,0,...,0)=\emptyset$,
\item $\overline{\partial}W=0$ and $\tau_F(W)=0$.
\end{enumerate}

Then there exists $Y_1\in \Gamma^{\infty}_{(r+1,k)}$ so that
\begin{enumerate}
\item $\overline{\partial}Y_1\in \Gamma^{\infty}_{(r+1,k+1)}$,
\item $\text{supp}(Y_1)\cap F^{-1}(0,0,...,0)=\emptyset$
\end{enumerate}

Therefore, $\overline{\partial}Y_1\in \Gamma^{\infty}_{(r+1,k+1)}$ satisfies the conditions of the proposition for $s=k+1$.  In other words,

\begin{enumerate}
\item $\text{supp}(\overline{\partial}Y_1)\cap F^{-1}(0,0,...,0)=\emptyset$,
\item $\overline{\partial}\overline{\partial}Y_1=0$ and $\tau_F\overline{\partial}(Y_1)=\overline{\partial}W=0$.
\end{enumerate}

By the induction hypothesis, we have $\overline{\partial}Y_2\in \Gamma^{\infty}_{(r+2,k+1)}$ and $\tau_F\overline{\partial}Y_2=\overline{\partial}Y_1$ for some $Y_2\in \Gamma^{\infty}_{(r+2,k)}$.  Thus we have \[\overline{\partial}(\tau_F Y_2)=\overline{\partial}Y_1.\]  Let us define $Y_3:=Y_1-\tau_FY_2$.  Then we have
\[\tau_FY_3=W\] and \[\overline{\partial}Y_3=0.\]  Since $\Omega$ is $L^{\infty}$-pseudoconvex,
there exists $Y\in \Gamma^{\infty}_{(r+1, k-1)}\cap\textbf{dom}(\overline{\partial})$ so that $\overline{\partial}Y=Y_3$.  Thus we have 
\[\tau_F\overline{\partial}Y=W\] as desired.

\end{proof}

\begin{remark}\label{rem1}

The same procedure is applicable to the space $\Gamma^2_{(r,s)}$ where the proof of Proposition \ref{prop3} is modified slightly to accomodate the different space.  In this case, we do not need a bounded $L^{\infty}$-pseudoconvex domain, as the $\overline{\partial}$-problem is solvable on $L^2(\Omega)$ for any bounded pseudoconvex domain.  See \cite{EJS} for more information on the $L^2$-theory of the $\overline{\partial}$-Neumann problem.  
\end{remark}

The proof for the following Proposition is the same as for Proposition \ref{prop3}, except one must use the following theorem in place of the solvability of $\overline{\partial}$ in $L^{\infty}$.
\begin{theorem}\cite{ChS}
Let $\Omega\subset \mathbb{C}^n$ be a bounded pseudoconvex domain with a $C^{\infty}$-smooth boundary.  Suppose $\beta\in C^{\infty}_{(0,s)}(\overline{\Omega}) $ so that $\overline{\partial}\beta=0$.  Then there exists $\alpha\in C^{\infty}_{(0,s-1)}(\overline{\Omega}) $ so that $\overline{\partial}\alpha=\beta$.  

\end{theorem}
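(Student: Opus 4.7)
The plan is to deduce the statement from H\"ormander's $L^2$-existence theorem combined with Kohn's weighted estimates for the $\overline{\partial}$-Neumann operator. By pseudoconvexity and H\"ormander's theorem there already exists a solution $\alpha\in L^2_{(0,s-1)}(\Omega)$ to $\overline{\partial}\alpha=\beta$; the entire content of the theorem is therefore a regularity assertion, namely that a solution can be chosen smooth up to $\overline{\Omega}$.

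The regularity is obtained from Kohn's weighted technique. Fix the smooth strictly plurisubharmonic weight $\phi(z)=|z|^2$ (bounded on $\overline{\Omega}$) and, for $t\geq 0$, introduce the weighted inner product $\langle u,v\rangle_t:=\int_\Omega \langle u,v\rangle\, e^{-t\phi}\,dV$ on $(0,s)$-forms. The associated weighted adjoint $\overline{\partial}^*_t$ satisfies a basic estimate whose zeroth-order positive term is proportional to $t$, producing a bounded weighted Neumann operator $N_t$. For each Sobolev index $k\in\mathbb{N}$, an induction on the number of tangential derivatives near $\partial\Omega$ shows that if $t\geq t_k$ is taken large enough to absorb the commutator errors generated by differentiating the $\overline{\partial}$-Neumann boundary conditions, then $\overline{\partial}^*_{t}N_t$ is continuous from $W^k_{(0,s)}(\Omega)$ into $W^k_{(0,s-1)}(\Omega)$; elliptic regularization then yields $\alpha^{(k)}\in W^k_{(0,s-1)}(\Omega)$ with $\overline{\partial}\alpha^{(k)}=\beta$.

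To pass from the family $\{\alpha^{(k)}\}$ to a single solution smooth up to the boundary I would apply a Mittag-Leffler / patching argument: at each step $\alpha^{(k+1)}-\alpha^{(k)}$ is $\overline{\partial}$-closed in $W^k$, and by cutting off away from the boundary and applying the $L^2$-theory one replaces it by a $\overline{\partial}$-exact form of arbitrarily small $W^k$-norm. Summing the resulting corrections produces a limit $\alpha\in\bigcap_k W^k_{(0,s-1)}(\Omega)=C^\infty_{(0,s-1)}(\overline{\Omega})$ with $\overline{\partial}\alpha=\beta$.

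The main obstacle is the Sobolev a priori estimate of the second paragraph. Without a finite-type hypothesis on $\partial\Omega$ there is no subelliptic gain of derivatives, so all regularity must come from the weight. The delicate bookkeeping consists in identifying which commutators of $\overline{\partial}$ and $\overline{\partial}^*_t$ with iterated tangential vector fields can be absorbed by the $t\phi$ term and verifying that the constants, though they grow with $k$, are harmless once $t_k$ is chosen large enough. This commutator calculus is the technical heart of Kohn's theorem.
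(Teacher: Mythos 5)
This statement is not proved in the paper at all: it is quoted from \cite{ChS} (Kohn's global regularity theorem for smooth bounded pseudoconvex domains) and used as a black box in the proof of Proposition \ref{prop4}. Your outline is, in substance, exactly the proof given in that reference --- H\"ormander's $L^2$ existence theorem plus Kohn's weighted a priori estimates with weight $t|z|^2$, taking $t\ge t_k$ to gain $W^k$ regularity, followed by a Mittag--Leffler patching of the $W^k$ solutions --- so you have reconstructed the standard argument rather than found a different route, and if your goal is to use the theorem as the paper does, a citation suffices. If your goal is to actually prove it, two points remain open in your sketch. The first you concede yourself: the entire analytic content is the commutator/absorption calculus behind the boundedness of $\overline{\partial}^*_tN_t$ on $W^k$ for $t\ge t_k$, which you describe but do not carry out, so the proposal is a roadmap rather than a proof. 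The second is the patching step, which is subtler than ``cutting off away from the boundary.'' For $s\ge 2$ the correct mechanism is to solve $\overline{\partial}v=\alpha^{(k+1)}-\alpha^{(k)}$ in $W^k$ (possible since the difference is a $\overline{\partial}$-closed form of positive degree) and then mollify $v$ up to the boundary, so that $\overline{\partial}v^{\epsilon}$ is a boundary-smooth exact form approximating the difference in a lower Sobolev norm. For $s=1$ the difference is a holomorphic function $h\in W^k(\Omega)$: a cutoff destroys holomorphy, and the resulting $\overline{\partial}$-datum $h\,\overline{\partial}\chi$ lives in a collar where $h$ has no smallness, so the correction you solve for is neither small in $W^k$ nor in $W^{k+1}$. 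What is actually needed there is the density of holomorphic functions smooth up to $\overline{\Omega}$ in the space of $W^k$ holomorphic functions (a Catlin-type approximation theorem, itself established with the same weighted machinery). As written, your third paragraph would not close in the bottom degree.
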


\begin{proposition}\label{prop4} Let $\Omega\subset \mathbb{C}^n$ be a bounded pseudoconvex domain with a $C^{\infty}$-smooth boundary and $F:=(f_1,...,f_n):\Omega\rightarrow \Omega$ be injective, holomorphic, and $C^{\infty}$-smooth up to $\overline{\Omega}$.  Let $W\in \Gamma^{C^{\infty}(\overline{\Omega})}_{(r,s)}$ so that
\begin{enumerate}
\item $\overline{\partial}W=0$.
\item $W$ is supported away from $F^{-1}(0,0,...,0)$.
\item $\tau_F(W)=0$.

\end{enumerate}
Then there exists $Y\in \Gamma^{C^{\infty}(\overline{\Omega})}_{(r+1,s-1)}$ so that $\tau_F\overline{\partial}Y=W$.

\end{proposition}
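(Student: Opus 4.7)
The plan is to mirror the proof of Proposition \ref{prop3} essentially verbatim, with the sole substitution of the Chen--Shaw $C^\infty(\overline{\Omega})$-solvability theorem in place of $L^\infty$-solvability of $\overline{\partial}$ whenever a primitive must be produced. First I would fix a cutoff $\chi\in C^\infty(\overline{\Omega})$ equal to $1$ on $\operatorname{supp}(W)$ and vanishing in a neighborhood of the single point $F^{-1}(0)$ (which exists by injectivity of $F$), and define the Koszul-type coefficients
\[
g_j := \frac{\chi\,\overline{f_j}}{\sum_{k=1}^n |f_k|^2}, \qquad X := \sum_{j=1}^n e_j\otimes g_j.
\]
The denominator is bounded away from zero on $\operatorname{supp}(\chi)$ and each $f_k$ lies in $C^\infty(\overline{\Omega})$, so $g_j\in C^\infty(\overline{\Omega})$ as well. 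Setting $Y:=X\wedge W$ gives $\tau_F(Y)=\tau_F(X)\wedge W - X\wedge \tau_F(W) = W$ by exactly the computation used for Proposition \ref{prop3}, and $Y$ lies in $\Gamma^{C^\infty(\overline{\Omega})}_{(r+1,s)}$.

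Next I would carry out the descending induction on $s$. For the base case $s=n$, every $(0,n)$-form is automatically $\overline{\partial}$-closed, so the Koszul construction above produces $Y_1$ with $\tau_F(Y_1)=W$, and the cited theorem of Chen--Shaw yields $Y\in \Gamma^{C^\infty(\overline{\Omega})}_{(r+1,n-1)}$ with $\overline{\partial}Y=Y_1$, hence $\tau_F\overline{\partial}Y=W$. For the inductive step at level $s=k$, I would build $Y_1$ with $\tau_F(Y_1)=W$ smooth up to the boundary, check that $\overline{\partial}Y_1$ satisfies the hypotheses at level $k+1$, apply the induction hypothesis to produce $Y_2$ with $\tau_F\overline{\partial}Y_2=\overline{\partial}Y_1$, and then set $Y_3:=Y_1-\tau_F Y_2$, so that $\tau_F Y_3=W$ and $\overline{\partial}Y_3=0$. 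Invoking Chen--Shaw one last time gives $Y\in \Gamma^{C^\infty(\overline{\Omega})}_{(r+1,k-1)}$ with $\overline{\partial}Y=Y_3$, from which $\tau_F\overline{\partial}Y=W$.

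The only conceivable obstacle is maintaining $C^\infty(\overline{\Omega})$-regularity at each stage, since the analogue in Proposition \ref{prop3} only required boundedness. This is handled automatically by the stronger hypotheses here: $F$ smooth and injective up to $\overline{\Omega}$ keeps the $g_j$ smooth on $\overline{\Omega}$ once we have localized $\chi$ away from the singleton $F^{-1}(0)$; the operations $\wedge$ and $\tau_F$ both preserve $\Gamma^{C^\infty(\overline{\Omega})}_{(*,*)}$; and the Chen--Shaw theorem delivers solutions of $\overline{\partial}$ that are themselves in $C^\infty_{(0,s-1)}(\overline{\Omega})$. With these regularity checkpoints in place, the descending induction of Proposition \ref{prop3} closes without further modification.
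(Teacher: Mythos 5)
Your proposal matches the paper's intended argument exactly: the paper itself only states that the proof of Proposition \ref{prop4} is that of Proposition \ref{prop3} with the Chen--Shaw $C^{\infty}(\overline{\Omega})$-solvability theorem substituted for $L^{\infty}$-solvability of $\overline{\partial}$, which is precisely the route you take. Your additional regularity checkpoints (smoothness of the $g_j$ up to $\overline{\Omega}$ and preservation of $\Gamma^{C^{\infty}(\overline{\Omega})}_{(\ast,\ast)}$ under $\wedge$ and $\tau_F$) are exactly the minor modifications the paper leaves implicit.
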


The following is the main theorem.  As a consequence of this main theorem, we have that the commuting Toeplitz operator problem as seen in \cite{ST} is true on a possibly larger class of bounded pseudoconvex domains.

\begin{theorem}\label{main}
Let $\Omega$ be a bounded $L^{\infty}$-pseudoconvex domain in $\mathbb{C}^n$.  Then $\Omega$ is Gleason solvable.
\end{theorem}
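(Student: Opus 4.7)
The plan is to reduce the statement to an application of Proposition~\ref{prop3} with $F(z) := z - \alpha = (z_1 - \alpha_1, \ldots, z_n - \alpha_n)$, which is injective, polynomial, smooth up to $\overline{\Omega}$, and satisfies $F^{-1}(0) = \{\alpha\}$. The goal is to produce $\sigma \in \Gamma^{\infty}_{(1,0)}$ with $\overline{\partial}\sigma = 0$ and $\tau_F(\sigma) = g$; writing $\sigma = \sum_{j=1}^n e_j \otimes g_j$ then supplies the desired decomposition $g = \sum_{j=1}^n (z_j - \alpha_j) g_j$ with $g_j \in H^{\infty}(\Omega)$.

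First I would build a smooth preliminary lift of $g$ by patching a local holomorphic decomposition with a global non-holomorphic one. Pick $\rho > 0$ with $B(\alpha, \rho) \Subset \Omega$. The integral form of Taylor's theorem gives bounded holomorphic $h_j^{\mathrm{loc}}$ on $B(\alpha, \rho)$ with $g = \sum_j (z_j - \alpha_j)\, h_j^{\mathrm{loc}}$ there, while on $\Omega \setminus \{\alpha\}$ the non-holomorphic choice $h_j^{\mathrm{glob}}(z) := \overline{(z_j - \alpha_j)}\, g(z) / \|z - \alpha\|^2$ yields $g = \sum_j (z_j - \alpha_j)\, h_j^{\mathrm{glob}}$. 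Using a cutoff $\chi \in C^{\infty}_c(B(\alpha, \rho))$ with $\chi \equiv 1$ on a neighborhood of $\alpha$, set $h_j := \chi\, h_j^{\mathrm{loc}} + (1 - \chi)\, h_j^{\mathrm{glob}}$ and $\sigma := \sum_{j} e_j \otimes h_j$. The factor $1 - \chi$ kills the singularity of $h_j^{\mathrm{glob}}$ at $\alpha$, so each $h_j \in \mathcal{C}_{(0,0)}$, and $\tau_F(\sigma) = g$ on all of $\Omega$.

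Next I would invoke Proposition~\ref{prop3} to correct $\sigma$ into a holomorphic lift. Put $W := \overline{\partial}\sigma \in \Gamma^{\infty}_{(1,1)}$. Since $h_j^{\mathrm{loc}}$ is holomorphic, the only surviving contributions to $\overline{\partial} h_j$ carry either $\overline{\partial}\chi$ or $1 - \chi$, so $\mathrm{supp}(W) \cap F^{-1}(0) = \emptyset$; automatically $\overline{\partial} W = 0$, and $\tau_F(W) = \overline{\partial}(\tau_F \sigma) = \overline{\partial} g = 0$. Proposition~\ref{prop3} with $r = 1$, $s = 1$ then produces $Y \in \Gamma^{\infty}_{(2,0)} \cap \textbf{dom}(\overline{\partial})$ satisfying $\tau_F \overline{\partial} Y = W$. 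Define $\sigma' := \sigma - \tau_F Y$; the identities $\tau_F^2 = 0$ and $\tau_F \overline{\partial} = \overline{\partial} \tau_F$ give $\tau_F(\sigma') = g$ and $\overline{\partial}\sigma' = 0$, and boundedness of the components of $F$ and of $Y$ forces $\sigma' \in \Gamma^{\infty}_{(1,0)}$, so its components are the required bounded holomorphic $g_j$.

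The main obstacle is the patching in the preliminary lift: the local holomorphic decomposition exists only near $\alpha$, while the obvious global decomposition blows up there, so the cutoff must be arranged so that the resulting $\sigma$ is globally bounded and so that $\overline{\partial}\sigma$ is supported off $\alpha$. Once this geometric balancing is in place, Proposition~\ref{prop3}, which is the algebraic incarnation of $L^{\infty}$-solvability of $\overline{\partial}$ on $\Omega$, produces the holomorphic correction and closes the argument.
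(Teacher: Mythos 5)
Your proposal is correct and follows essentially the same route as the paper: the same patching of a local holomorphic decomposition near $\alpha$ with the global non-holomorphic lift $\overline{(z_j-\alpha_j)}\,g/\|z-\alpha\|^2$ via a cutoff, followed by the same application of Proposition~\ref{prop3} to the resulting $\overline{\partial}$-closed, $\tau_F$-closed $(1,1)$-object. The only cosmetic difference is that you package the correction uniformly as $\sigma' = \sigma - \tau_F Y$ using $\tau_F^2=0$ and $\tau_F\overline{\partial}=\overline{\partial}\tau_F$, whereas the paper writes out the corrected components explicitly for $n=2$ and $n=3$; your formulation handles general $n$ in one stroke.
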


\begin{proof}
This proof is an application of a few results concerning the $\overline{\partial}$-Koszul complex.  See \cite{ST} for more information on the $\overline{\partial}$-Koszul complex.  We will first consider $n=2$.
Let $\Omega\subset \mathbb{C}^2$ be a bounded $L^{\infty}$-pseudoconvex domain.
Let $g\in H^{\infty}(\Omega)$ and without loss of generality, suppose $g(0,0)=0$.  Since $g$ has a power series expansion about $(0,0)$ converging on some open set $U\subset \subset \Omega$, one can write \[g=z_1\lambda_1+z_2\lambda_2\] on $U$.  Here, $\lambda_1, \lambda_2\in H^{\infty}(U)\cap C^{\infty}(\overline{U})$.  Now let $\chi_1\in C^{\infty}(U)$, supported in $U$, $0\leq \chi_1\leq 1$ on $U$ and $\chi\equiv 1$ on $\widetilde{U}\subset \subset U$.  Also, $(0,0)\in \widetilde{U}$.  Then on $\Omega\setminus \overline{\widetilde{U}}$ we have \[g=z_1\left(g\frac{\overline{z_1}}{|z_1|^2+|z_2|^2}\right)+z_2\left(g\frac{\overline{z_2}}{|z_1|^2+|z_2|^2}\right)\]
Therefore, \[g\equiv z_1\left((1-\chi_1)g\frac{\overline{z_1}}{|z_1|^2+|z_2|^2}+\chi_1\lambda_1\right)+z_2\left((1-\chi_1)g\frac{\overline{z_2}}{|z_1|^2+|z_2|^2}+\chi_1\lambda_2\right)\] on $\Omega$.  Now for the sake of notation, define \[L_1:=\left((1-\chi_1)g\frac{\overline{z_1}}{|z_1|^2+|z_2|^2}+\chi_1\lambda_1\right)\] and 
\[L_2:=\left((1-\chi_1)g\frac{\overline{z_2}}{|z_1|^2+|z_2|^2}+\chi_1\lambda_2\right)\]
Notice that both $L_1$ and $L_2$ are bounded on $\Omega$ and partial derivatives of $L_1$ and $L_2$ of all orders exist and are continuous on compact subsets of $\Omega$.  In addition, $\overline{\partial}L_1$ and $\overline{\partial}L_2$ are bounded on $\Omega$.  Now define 
\[W_1:=e_1\otimes \overline{\partial}L_1+e_2\otimes \overline{\partial}L_2.\]  Notice that $W_1$ satisfies the hypothesis of Proposition \ref{prop3} since the support of $W_1$ is away from $F^{-1}(0,0)$ where $F:=(z_1,z_2)$, $W_1\in \Gamma^{\infty}_{(1,1)}\cap \textbf{dom}(\overline{\partial})$,  $\overline{\partial}W_1=0$, and $\tau_F W_1=0$.  Thus by Proposition \ref{prop3}, there exists there exists $H_1\in \mathcal{C}_{(0,0)}^{\infty}\cap\textbf{dom}(\overline{\partial})$ so that \[Y_1:=\left(e_1\wedge e_2\right)\otimes H_1\in \Gamma^{\infty}_{(2,0)}\] and \[\tau_F\overline{\partial}Y_1=e_2\otimes z_1\overline{\partial}H_1-e_1\otimes z_2\overline{\partial}H_1 =W_1.\]  Thus we have that
\[\overline{\partial}L_1=-z_2\overline{\partial}H_1\] and
\[\overline{\partial}L_2=z_1\overline{\partial}H_1.\]  Therefore, \[L_1+z_2H_1\in H^{\infty}(\Omega)\] and \[L_2-z_1H_1\in H^{\infty}(\Omega).\]  Furthermore, \[z_1(L_1+z_2H_1)+z_2(L_2-z_1H_1)\equiv g\] on $\Omega$.  This completes the proof for $n=2$.  We will demonstrate the proof works for $n=3$ and then show how it can be generalized.  For $\Omega\subset   \mathbb{C}^n$ for $n=3$, we first use a power series argument similar to the $n=2$ case.  That is, if $g(0)=0$, one can write $g\equiv z_1L_1+z_2L_2+z_3L_3$ where $L_j$ are holomorphic on a neighborhood of $0$, $L_j$ are smooth and bounded on $\Omega$, and $\overline{\partial}L_j\in L^{\infty}_{(0,1)}(\Omega)$ for $j=1,2,3$.  We define \[W:=e_1\otimes \overline{\partial}L_1+e_2\otimes \overline{\partial}L_2+e_3\otimes \overline{\partial}L_3\in \Gamma^{\infty}_{(1,1)}.\]  Clearly, $\overline{\partial}W=0$ and $\tau_F(W)=0$ where $F=(z_1,z_2,z_3)$.  Furthermore, the support of $W$ is away from $F^{-1}(0,0,0)$.  Therefore, by Proposition \ref{prop3}, there exists $Y\in \Gamma^{\infty}_{(2,0)}$ so that 
\[\tau_F\overline{\partial}Y=W.\]  Now, $Y\in \Gamma^{\infty}_{(2,0)}$, therefore, $Y$ has the form
\[Y=e_1\wedge e_2\otimes y_3+e_1\wedge e_3\otimes y_2+e_2\wedge e_3\otimes y_1\] for $\{y_1, y_2, y_3\}\subset \mathcal{C}_{(0,0)}^{\infty}\cap\textbf{dom}(\overline{\partial})$.  Now we apply $\tau_F\overline{\partial}$ to $Y$ and collect terms together.  We get
\[W=\tau_F\overline{\partial}Y=e_1\otimes \left(-z_2\overline{\partial}y_3-z_3\overline{\partial}y_2\right)+           e_2\otimes\left(z_1\overline{\partial}y_3-z_3\overline{\partial}y_1\right)+e_3\otimes \left(z_1\overline{\partial}y_2+z_2\overline{\partial}y_1\right).\]  Thus we have
\[\overline{\partial}L_1=\left(-z_2\overline{\partial}y_3-z_3\overline{\partial}y_2\right),\]
\[\overline{\partial}L_2=\left(z_1\overline{\partial}y_3-z_3\overline{\partial}y_1\right),\]
and
\[\overline{\partial}L_3=\left(z_1\overline{\partial}y_2+z_2\overline{\partial}y_1\right).\]
This implies \[L_1+z_2y_3+z_3y_2\in H^{\infty}(\Omega),\]
\[L_2-z_1y_3+z_3y_1\in H^{\infty}(\Omega),\]
and
\[L_3-z_1y_2-z_2y_1\in H^{\infty}(\Omega).\]  Furthermore, we can write
\[g\equiv z_1\left(L_1+z_2y_3+z_3y_2\right)+z_2\left(L_2-z_1y_3+z_3y_1\right)+z_3\left(L_3-z_1y_2-z_2y_1\right).\]  The proof for $n>3$ is an analog of the previous proofs with more terms involved.  We write $g$ as a linear combination of bounded functions in the domain of $\overline{\partial}$ that are holomorphic on a neighborhood of $0$.  Then we use Proposition \ref{prop3} to 'correct' these bounded functions to be holomorphic and bounded on $\Omega$.  Furthermore, the linear combination of these functions still gives $g$.
\end{proof}
The advantage of this approach is that one can modify the proof to consider the Bergman space $A^2(\Omega)$. Recall the Bergman space of $\Omega$ is the space of square integrable holomorphic functions on $\Omega$.  The proof of this following theorem follows the proof of Theorem \ref{main} where Proposition \ref{prop3} is used in addition to Remark \ref{rem1}.
\begin{theorem}
Let $\Omega\subset\mathbb{C}^n$ be a bounded pseudoconvex domain.  Suppose $\phi\in A^2(\Omega)$ and $\phi(\alpha)=0$ for some $\alpha\in \Omega$.  Then, there exists $\phi_1,\phi_2,...,\phi_n\in A^2(\Omega)$ so that $\phi\equiv\sum_{j=1}^n(z_j-\alpha_j)\phi_j$.
\end{theorem}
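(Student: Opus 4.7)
The plan is to mimic the proof of Theorem \ref{main} verbatim, substituting the $L^2$ analogue of Proposition \ref{prop3} from Remark \ref{rem1} for its $L^{\infty}$ version. After translating coordinates I may assume $\alpha=0$. Since $\phi\in A^2(\Omega)$ is holomorphic with $\phi(0)=0$, on a polydisc $U\subset\subset\Omega$ centered at $0$ the Taylor expansion gives $\lambda_1,\dots,\lambda_n\in H^{\infty}(U)\cap C^{\infty}(\overline U)$ with $\phi=\sum_{j=1}^n z_j\lambda_j$ on $U$. Choosing $\chi_1\in C^{\infty}_c(U)$ with $\chi_1\equiv 1$ on a smaller neighborhood of $0$, I would set
\[ L_j := (1-\chi_1)\,\phi\,\frac{\overline{z_j}}{|z_1|^2+\cdots+|z_n|^2}+\chi_1\lambda_j,\qquad j=1,\dots,n, \]
so that $\phi\equiv\sum_j z_j L_j$ on $\Omega$, exactly as in the proof of Theorem \ref{main}.

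Next I would verify the $L^2$ regularity required for the Koszul machinery. The factor $(1-\chi_1)\overline{z_j}/(|z_1|^2+\cdots+|z_n|^2)$ is smooth and bounded on $\Omega$, because $|z|$ is bounded away from $0$ on the support of $1-\chi_1$; combined with $\phi\in L^2$ and boundedness of $\Omega$, this gives $L_j\in L^2(\Omega)\cap C^{\infty}(\Omega)$. Using $\overline{\partial}\phi=0$ globally and $\overline{\partial}\lambda_j=0$ on $U$, one computes
\[ \overline{\partial}L_j = \phi\,\overline{\partial}\!\left((1-\chi_1)\,\frac{\overline{z_j}}{|z_1|^2+\cdots+|z_n|^2}\right) + \lambda_j\,\overline{\partial}\chi_1, \]
which lies in $L^2(\Omega)$ since the coefficient of $\phi$ is bounded and the second summand is smooth and compactly supported in $U$. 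Thus $W:=\sum_j e_j\otimes\overline{\partial}L_j$ belongs to $\Gamma^2_{(1,1)}\cap\textbf{dom}(\overline{\partial})$, is supported away from $F^{-1}(0)$ for $F=(z_1,\dots,z_n)$, and satisfies $\overline{\partial}W=0$ together with $\tau_F W=\overline{\partial}\phi=0$.

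At this point I would apply the $L^2$ version of Proposition \ref{prop3} given by Remark \ref{rem1} to extract $Y\in\Gamma^2_{(2,0)}\cap\textbf{dom}(\overline{\partial})$ with $\tau_F\overline{\partial}Y=W$. Writing $Y=\sum_{i<k}e_i\wedge e_k\otimes y_{ik}$ and matching $e_j$-components in the identity $\tau_F\overline{\partial}Y=W$ produces, for each $j$, the Koszul relation $\overline{\partial}L_j=\sum_{i<j}z_i\,\overline{\partial}y_{ij}-\sum_{j<k}z_k\,\overline{\partial}y_{jk}$. Setting
\[ \phi_j := L_j + \sum_{j<k}z_k\,y_{jk} - \sum_{i<j}z_i\,y_{ij} \]
therefore gives $\overline{\partial}\phi_j=0$, so each $\phi_j$ is holomorphic; since $\Omega$ is bounded and each $y_{ik}\in L^2$, the correction terms $z_k y_{jk}$ are in $L^2$, hence $\phi_j\in A^2(\Omega)$. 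The antisymmetric cancellation $\sum_j z_j\bigl(\sum_{j<k}z_k y_{jk}-\sum_{i<j}z_i y_{ij}\bigr)=0$ then gives $\sum_j z_j\phi_j\equiv\sum_j z_j L_j\equiv\phi$, as desired.

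The main obstacle is not conceptual but the verification that Remark \ref{rem1} delivers what is needed: the descending induction in the proof of Proposition \ref{prop3} must be rerun with H\"ormander's $L^2$-solvability of $\overline{\partial}$ on bounded pseudoconvex domains in place of $L^{\infty}$-solvability, and one must check that the auxiliary functions $g_j=\chi\overline{f_j}/\sum|f_j|^2$ used to build $X$ remain bounded (they do, since their support is disjoint from $F^{-1}(0)$, where $\sum|f_j|^2$ is bounded below). Once this $L^2$-Koszul framework is established, the transition from the $L^{\infty}$ Gleason argument of Theorem \ref{main} to the $A^2$ conclusion here is automatic, because every coefficient multiplying the $L^2$ quantities $\phi$ or $y_{ik}$ in the construction is bounded on $\Omega$.
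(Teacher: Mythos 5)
Your proposal is correct and is essentially the paper's own argument: the paper proves this theorem by simply repeating the proof of Theorem \ref{main} with the $L^2$ Koszul complex of Remark \ref{rem1} (i.e., H\"ormander's $L^2$-solvability of $\overline{\partial}$ on bounded pseudoconvex domains) replacing the $L^{\infty}$ version of Proposition \ref{prop3}. You have in fact supplied more detail than the paper does, including the general-$n$ bookkeeping of the coefficients $y_{ik}$ and the verification that all correction terms stay in $L^2$, and your signs agree with the paper's $n=2$ computation.
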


It is well known that the boundary smoothness of the domain plays a role in determining the regularity of the solutions of the $\overline{\partial}$-problem with smooth data. Therefore, one can prove the following result for the $\overline{\partial}$ problem with $C^{\infty}$-smooth data using the techniques developed for the $L^{\infty}$ case (see the proof of Theorem \ref{main}) together with Propostion \ref{prop4}.  This result is a variation of one of the main results in \cite{KN}.  The proof follows the proof of Theorem \ref{main} with Proposition \ref{prop4} replacing Proposition \ref{prop3}.

\begin{theorem}\label{thm6}
Let $\Omega\subset \mathbb{C}^n$ be a $C^{\infty}$-smooth bounded pseudoconvex domain.  Then $\Omega$ has solvable $C^{\infty}$-smooth Gleason problem.  That is, for any $\phi\in C^{\infty}(\overline{\Omega})\cap A^2(\Omega)$ vanishing at $\alpha:=(\alpha_1,...,\alpha_n)\in \Omega$, there exists
\[\{\phi_1,...,\phi_n\}\subset C^{\infty}(\overline{\Omega})\cap A^2(\Omega)\] so that
\[\phi\equiv \sum_{j=1}^n (z_j-\alpha_j)\phi_j.\] 
\end{theorem}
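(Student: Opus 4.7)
The plan is to run the proof of Theorem \ref{main} essentially verbatim, with Proposition \ref{prop4} replacing Proposition \ref{prop3} at the key step, while monitoring every quantity for smoothness up to $\partial\Omega$. Without loss of generality take $\alpha = 0$. Since $\phi$ is holomorphic on $\Omega$, it admits a power series expansion about $0$ converging on some neighborhood $U \Subset \Omega$; after separating the (vanishing) constant term one obtains
\[
\phi = \sum_{j=1}^n z_j \lambda_j \quad \text{on } U,
\]
with each $\lambda_j$ holomorphic on $U$.

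Next I would globalize the $\lambda_j$ by interpolating with the explicit non-holomorphic solution. Fix a cutoff $\chi_1 \in C_c^\infty(U)$ with $\chi_1 \equiv 1$ on a smaller neighborhood $\widetilde{U} \Subset U$ of $0$, and set
\[
L_j := (1-\chi_1)\,\phi\,\frac{\overline{z_j}}{|z_1|^2 + \cdots + |z_n|^2} + \chi_1 \lambda_j.
\]
The factor $(1-\chi_1)$ kills the singularity at the origin, so each $L_j$ is well-defined on $\Omega$; because $\phi \in C^\infty(\overline{\Omega})$ and $\chi_1, \lambda_j$ are smooth, each $L_j$ in fact belongs to $C^\infty(\overline{\Omega})$. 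By construction $\sum_{j=1}^n z_j L_j \equiv \phi$ on $\Omega$.

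Now form the $(1,1)$-valued form
\[
W := \sum_{j=1}^n e_j \otimes \overline{\partial}L_j \in \Gamma^{C^\infty(\overline{\Omega})}_{(1,1)},
\]
and take $F := (z_1, \dots, z_n)$. One checks $\overline{\partial} W = 0$ directly, $\text{supp}(W) \cap F^{-1}(0) = \emptyset$ (since on $\widetilde U$ each $L_j$ coincides with the holomorphic $\lambda_j$, so $\overline{\partial}L_j = 0$ there), and $\tau_F(W) = \sum_j z_j \overline{\partial}L_j = \overline{\partial}\bigl(\sum_j z_j L_j\bigr) = \overline{\partial}\phi = 0$. Applying Proposition \ref{prop4} produces $Y \in \Gamma^{C^\infty(\overline{\Omega})}_{(2,0)}$ with $\tau_F \overline{\partial} Y = W$. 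Expanding $Y$ in the standard basis $\{e_i \wedge e_k\}_{i<k}$ of $\wedge^2 V$ with coefficients $y_{ik} \in C^\infty(\overline{\Omega})$, and applying the Leibniz rule for $\tau_F$, one equates components with $W$ exactly as in the $n=2$ and $n=3$ cases of Theorem \ref{main}. This gives $n$ explicit combinations of the form $\phi_j = L_j + (\text{linear combinations of } z_i y_{ik})$ that are annihilated by $\overline{\partial}$. Since each $\phi_j$ is both $C^\infty$ up to $\partial\Omega$ and holomorphic on $\Omega$, and $\Omega$ is bounded, $\phi_j \in C^\infty(\overline{\Omega}) \cap A^2(\Omega)$; the identity $\phi \equiv \sum_j z_j \phi_j$ then follows from $\sum_j z_j L_j \equiv \phi$ together with the cancellation of the correction terms that is already present in the proof of Theorem \ref{main}.

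The only real obstacle is making sure regularity is preserved at every step: the coefficients $y_{ik}$ produced by Proposition \ref{prop4} must be $C^\infty$ up to the boundary. This is precisely where the smoothness of $\partial\Omega$ enters, through the global boundary-regularity theorem of \cite{ChS} for $\overline{\partial}$ with smooth data on $C^\infty$-smooth bounded pseudoconvex domains that was built into the statement of Proposition \ref{prop4}. Once that is secured, the argument is a mechanical adaptation of the proof of Theorem \ref{main}, and the $L^\infty$-pseudoconvexity hypothesis is no longer needed because solvability of $\overline{\partial}$ in $C^\infty(\overline{\Omega})$ does the corresponding work in the smooth category.
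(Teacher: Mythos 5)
Your proposal is correct and follows exactly the route the paper intends: the paper's proof of Theorem \ref{thm6} is literally ``the proof of Theorem \ref{main} with Proposition \ref{prop4} replacing Proposition \ref{prop3},'' and you have carried that out faithfully, with the added (and welcome) care of verifying that each $L_j$ and hence each $\phi_j$ stays in $C^{\infty}(\overline{\Omega})$. No discrepancies to report.
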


\section{Aknowlegments}
I wish to thank Akaki Tikaradze and Alexander Izzo for useful conversations and comments on a preliminary version of this manuscript.  I also thank the anonymous referees for their suggestions. 

\section{Data Availability}
Data sharing not applicable to this article as no datasets were generated or analysed during the current study.

\bibliographystyle{amsalpha}
\bibliography{refscompthree}

\end{document}